\newtheorem{theorem}{Theorem}[section]
\newtheorem{proposition}[theorem]{Proposition}
\newtheorem{lemma}[theorem]{Lemma}
\theoremstyle{definition}
\newcommand{\N}{\mathbb{N}} 
\newcommand{\Z}{\mathbb{Z}} 
\newcommand{\R}{\mathbb{R}} 
\newcommand{\C}{\mathbb{C}} 
\newcommand{\D}{\mathbb{D}} 
\newcommand{\Hol}{\mathcal{H}(\mathbb{C})} 
\newcommand{\ve}{\varepsilon}
\begin{document}

\title[Hypercyclic functions for convolution operatos]{Multiplicative structures of hypercyclic functions for convolution operators}

\date{}

\author[Bernal]{Luis Bernal-Gonz\'alez}
\address{Departamento de An\'{a}lisis Matem\'{a}tico,
\newline\indent Facultad de Matem\'{a}ticas, Universidad de Sevilla,
\newline\indent Avenida Reina Mercedes,
\newline\indent 41080 Sevilla, Spain.}
\email{lbernal@us.es}

\author[Conejero]{J. Alberto Conejero}
\address{Instituto Universitario de Matem\'{a}tica Pura y Aplicada,
\newline\indent Universitat Polit\`{e}cnica de Val\`{e}ncia,
\newline\indent 46022 Val\`{e}ncia, Spain.}
\email{aconejero@upv.es}

\author[Costakis]{George Costakis}
\address{Department of Mathematics and Applied Mathematics, \newline\indent
University of Crete, Voutes Campus,
\newline\indent 70013 Heraklion, Crete, Greece.}
\email{costakis@math.uoc.gr}

\author[Seoane]{Juan B. Seoane-Sep\'{u}lveda}
\address{IMI and Departamento de An\'{a}lisis Matem\'{a}tico,\newline\indent Facultad de Ciencias Matem\'{a}ticas,
\newline\indent Plaza de Ciencias 3,
\newline\indent Universidad Complutense de Madrid,\newline\indent 28040 Madrid, Spain.}
\email{jseoane@mat.ucm.es}

\keywords{hypercyclic operator, convolution operator, composition operator, subexponential growth, group of non-vanishing entire functions, lineability, spaceability}
\subjclass[2010]{22A05, 30E10, 30K20, 46E10, 47A16}

\dedicatory{Dedicated to Professor Domingo Garc\'{\i}a on the occasion of his 60th birthday}

\thanks{}

\begin{abstract}
In this note, it is proved the existence of an infinitely generated multiplicative group consisting of entire functions that are,
except for the cons\-tant function $1$, hypercyclic with respect to the convolution operator associated to a given entire function of subexponential type.
A certain stability under multiplication is also shown for compositional hypercyclicity on complex domains.
\end{abstract}

\maketitle

\section{Introduction}

Assume that $X$ is a (Hausdorff) topological vector space over the real line $\R$ or the complex plane $\C$,
and consider the vector space $L(X)$ of all operators on $X$, that is, the family of
all continuous linear self-mappings $T:X \to X$. An operator $T \in L(X)$ is said to be {\it hypercyclic} provided that it admits a dense orbit, that is,
provided that there is some vector $x_0 \in X$ (called hypercyclic for $T$) such that the orbit $\{T^n x_0: \, n \in \N\}$ of $x_0$ under $T$ is dense in $X$
($\N := \{1,2, \dots \}$). The set of hypercyclic vectors for $T$ will be denoted by $HC(T)$. In this paper, we are concerned with the {\it size} of $HC(T)$, mainly from an algebraic point of view and for certain differential operators. For background on hypercyclic operators we refer the reader to the excellent books \cites{bayartmatheron2009,grosseperis2011}. An account of concepts and results about algebraic structures inside nonlinear sets can be found in \cites{arongurariyseoane2005,aronbernalpellegrinoseoane,bernalpellegrinoseoane2014linear,TAMS-Enflo,Seo1,Seo2,Seo3}.

\vskip .15cm

It is well known that if $X$ is an F-space (i.e., complete and metrizable) and $T$ is a hypercyclic operator then the set $HC(T)$ is {\it residual,} that is, it contains a dense $G_\delta$ subset of the (Baire) space $X$; we can say that $HC(T)$ is topologically large. Furthermore, for any topological vector
space and any hypercyclic $T \in L(X)$, the family $HC(T)$ is algebraically large; specifically, it contains, except for \,$0$, a {\it dense} (even $T$-invariant) vector subspace of \,$X$ (see \cite{wengenroth2003}). Starting from \cite{montes1996}, a number of criteria have been established for an operator $T \in L(X)$ to support a {\it closed infinite dimensional} vector subspace all of whose nonzero vectors are $T$-hypercyclic on an F-space $X$; however, not all hypercyclic operators support such a subspace  (see \cite[Chapter 8]{bayartmatheron2009} and \cite[Chapter 10]{grosseperis2011}).

\vskip .15cm

By a domain in \,$\C$ we mean a nonempty connected open subset \,$G \subset \C$. It is well known that the space \,$\mathcal H (G)$ \,of holomorphic functions
\,$G \to \C$ \,becomes an F-space when endowed with the topology of uniform convergence on compact subsets of \,$G$.
We are mainly interested in operators defined on the space $\Hol$ \,of entire functions \,$\C \to \C$.
An operator $T \in L(\Hol )$ is said to be a {\it convolution operator} \,provided that it commutes with translations, that is,
$$
T \circ \tau_a = \tau_a \circ T  \hbox{ \ for all \ } a \in \C ,
$$
where $\tau_a f := f(\cdot + a)$. Then $T \in L(\Hol )$ happens to be a convolution operator if and only if $T$ is an infinite order linear differential operator with constant coefficients $T = \Phi (D)$, where $\Phi$ is an entire function with exponential type, that is, there exist
constants $A,B \in (0,+\infty )$ such that $|\Phi (z)| \le A e^{B|z|}$ for all $z \in \C$. For such a function $\Phi (z) = \sum_{n \ge 0} a_nz^n$ and $f \in \Hol$, we have
$$
\Phi (D) f = \sum_{n = 0}^\infty a_n f^{(n)} .
$$
Godefroy and Shapiro \cite{godefroyshapiro1991} proved that every non-scalar convolution operator is hypercyclic, so covering the classical Birkhoff and MacLane results on hypercyclicity of the translation operator $\tau_a$ (take $\Phi (z) = e^{az}$, $a \ne 0$) and of the derivative operator $D: f \mapsto f'$ (take $\Phi (z) = z$), respectively. It has been proved that, for
every non-scalar convolution operator $T$, the set \,$HC(T)$ \,contains, except for $0$, a closed infinite dimensional vector subspace of $\Hol$; see
\cites{menet2014,petersson2006,shkarin2010} and also \cite[Section 4.5]{aronbernalpellegrinoseoane} and \cite[Section 10.1]{grosseperis2011}.

\vskip .15cm

In view of the preceding paragraph, we can say that hypercyclic vectors of convolution operators are rather stable under summation and scaling.
However, stability {\it under multiplication} does not seem to be so clear. For instance, for every $a \ne 0$, every $f \in HC(\tau_a)$ and every
$n \in \N$ with $n \ge 2$ we have that $f^n \not\in HC(\tau_a)$ (see \cite[Cor.~2.4]{aronconejeroperisseoane2007}).
As a positive result, it was proved in \cite[Th.~2.3]{aronconejeroperisseoane2007} the existence of a function $f \in \Hol$
--in fact, of a residual subset of them-- such that \,$f^n \in HC(D)$ \,for all $n\in\N$, see also \cite{aronconejeroperisseoane2007sums}.
This result was extended by Bayart and Matheron who proved that there is even a residual set of functions in \,$\Hol$
\,generating a hypercyclic algebra for the derivative operator, that is every non-null function in one of these algebras
is hypercyclic for the operator \,$D$ \,\cite[Th.~8.26]{bayartmatheron2009}. Recall that the algebra generated by a function $f$ is nothing but
the set \,$\{P \circ f: \, P$ polynomial, $P(0) = 0\}$.
The existence of (one-generated) algebras of hypercyclic functions for \,$D$ \,was also independently proved by Shkarin in \cite{shkarin2010}
and extended by B\`es {\it et al.}~in \cite{besconejeropapathaniasiou2016} to operators of the form $P(D)$, where $P$ is a nonconstant polynomial with $P(0) = 0$. The existence of hypercyclic algebras for many convolution operators not induced by polynomials, such as $\cos(D)$, $De^D$, or $e^D-aI$, where $0<a\le 1$ was recently shown in \cite{besconejeropapathaniasiou2018}.

\vskip .15cm

So far, the existence of multiply generated algebras of hypercyclic entire functions is unknown.
The lack of knowledge about existence of families of hypercyclic functions that are stable under multiplication seems to be one
of the main obs\-ta\-cles. Our goal in this paper is to contribute to fill in this gap.
Specifically, given a nonconstant entire function \,$\Phi$ \,of subexponential type, we will construct in Section 3 an infinitely generated multiplicative group of entire functions all of whose members, except for the constant function ${\bf 1} (z) = 1$, are hypercyclic with respect to the operator $\Phi (D)$.
A certain multiplicative stability is also shown for hy\-per\-cy\-cli\-ci\-ty with respect to composition operators. Section 2 is devoted to provide
the necessary background and auxiliary results.

\section{Non-vanishing hypercyclic entire functions}

Recall that the exponential type of a $\Phi \in \Hol$ is defined as, see \cite{boas1954},
$$
\tau (\Phi ) = \limsup_{r \to \infty} {\log \max \{|\Phi (z)|: \, |z|=r\} \over r}.
$$
Hence, $f$ has subexponential type if and only if, given $\ve > 0$, there exists $A = A(\ve ) \in (0,+\infty )$ such that
$|f(z)| \le A \, e^{\ve |z|}$ for all $z \in \C$. In fact, if \,$\Phi$ \,has subexponential type then \,$\Phi (D)$ \,is
a well defined operator on \,$\mathcal{H} (G)$ \,for any domain $G \subset \C$  (see for instance \cite{berensteingay1995} or \cite[Theorem 4]{bernal1999}).
It should be underlined that if \,$\Phi$ \,is of exponential type, then the
operator \,$\Phi (D)$ \,does not act on proper domains in general: take, for instance, $G = \D$ \,and \,$\Phi (z) = e^z$.

\vskip .15cm

As we have mentioned in Section 1, there is no entire function $f$ such that $f^2$ is hypercyclic for the translation operator $\tau_a = e^{aD}$ $(a \ne 0)$.
If we observe that \,$\Phi (z) := e^{az}$ \,has exponential type \,$\tau (\Phi ) = |a| > 0$, then we may wonder whether there are
entire functions $f$ such that $f$ and $f^2$ are in $HC(\Phi (D))$ if $\Phi$ has {\it subexponential type,} i.e., if \,$\tau (\Phi ) = 0$.
This property happens to be true, in a strong sense, in the special case of \,$\Phi = P$, a nonconstant polynomial with
\,$P(0) = 0$ \cite{besconejeropapathaniasiou2016}. The rate of growth of a hypercyclic function for $D$ has been optimally estimated in
\cites{grosse1990universal,shkarin1993}.

\vskip .15cm

It is well known (see, for instance, \cite{ahlfors1979}) that a function $f \in \Hol$ is never zero if and only if
there is $g \in \Hol$ such that $f = e^g$. Consistently, we can denote the family of non-vanishing entire functions
by \,$e^{\Hol}$. This family forms, trivially, a group under the pointwise multiplication. Moreover, $e^{\Hol}$ \,is --as it is easily proven--
a topological group (for background on topological groups, see for instance \cite{markley2010}) under the topology of uniform convergence in compacta, that is, both mappings
$$
(f,g) \in e^{\Hol} \times e^{\Hol} \mapsto fg \in e^{\Hol} \hbox{ \ and \ } f \in e^{\Hol} \mapsto {1 \over f} \in e^{\Hol}
$$
are continuous,
from which one can derive that each mapping \,$f \in e^{\Hol} \mapsto f^m \in e^{\Hol}$ ($m \in \Z =$ the set of integers) \,is
a continuous homomorphism (recall that a mapping \,$\Phi : G_1 \to G_2$ \,between two groups \,$G_1,G_2$ \,is an homomorphism whenever
\,$\Phi (ab) = \Phi(a) \Phi (b)$ \,for all \,$a,b \in G_1$).
Note that this mapping is not bijective (except for $m = \pm 1$), but if we
consider the topological subgroup \,$e^{\Hol}_{+,0} := \{f \in e^{\Hol}: \, f(0) > 0\}$ \,then each restriction
$$
\Phi_m: f \in e^{\Hol}_{+,0} \mapsto f^m \in e^{\Hol}_{+,0} \quad (m \in \Z \setminus \{0\})
$$
is not only continuous (and homomorphic) but also bijective, because for any $g \in e^{\Hol}_{+,0}$ there are exactly
\,$m$ \,entire functions \,$f$ \,with \,$f^m = g$ (necessarily, such $f$'s are non-vanishing), but only one of them satisfies
$f(0) > 0$. In fact, in the following two lemmas it is shown that the group \,$e^{\Hol}_{+,0}$ \,also enjoys a nice topological structure
and that every \,$\Phi_m$ \,is an automorphism of it.

\begin{lemma} \label{Lemma: topology of e+0}
The set \,$e^{\Hol}_{+,0}$ \,is a $G_\delta$-subset of \,$\Hol$. In particular, it is a separable completely metrizable space, as well as a Baire space.
\end{lemma}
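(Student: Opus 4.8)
The plan is to prove the $G_\delta$ assertion directly and then read off the remaining properties from the general theory of Polish spaces. Recall first that $\Hol$, with the topology of uniform convergence on compacta, is a separable Fr\'echet space, hence a separable completely metrizable (i.e.\ Polish) space, and in particular a Baire space. Granting that $e^{\Hol}_{+,0}$ is a $G_\delta$ subset, the ``in particular'' clause follows at once: separability is inherited by subspaces of separable metric spaces, so $e^{\Hol}_{+,0}$ is separable; by Alexandrov's theorem a $G_\delta$ subset of a completely metrizable space is itself completely metrizable; and every completely metrizable space is Baire. Thus the whole statement reduces to the single claim that $e^{\Hol}_{+,0}$ is $G_\delta$.

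To establish this I would split the defining conditions, writing $e^{\Hol}_{+,0} = e^{\Hol} \cap \{f \in \Hol : f(0) > 0\}$, and treat the two factors separately, since a countable intersection of $G_\delta$ sets is again $G_\delta$. Note that both conditions are genuinely needed: a function with $f(0)>0$ may still vanish elsewhere. For the second factor, the evaluation $\mathrm{ev}_0 : f \mapsto f(0)$ is continuous from $\Hol$ to $\C$, and the positive half-line is a $G_\delta$ subset of $\C$, because $(0,+\infty) = \bigcap_{k \ge 1}\{w \in \C : \Re w > 0,\ |\Im w| < 1/k\}$ is a countable intersection of open sets. Since preimages of open sets under a continuous map are open and preimages commute with intersections, $\{f : f(0) > 0\} = \mathrm{ev}_0^{-1}\big((0,+\infty)\big)$ is $G_\delta$ in $\Hol$.

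The heart of the argument is the first factor $e^{\Hol}$, the set of non-vanishing entire functions. Writing $D_n = \{z \in \C : |z| \le n\}$, I would exhaust $\C$ by these closed disks and express $e^{\Hol} = \bigcap_{n \ge 1}\{f \in \Hol : f \text{ has no zero in } D_n\}$. It then suffices to show that each set on the right is open, equivalently that its complement $Z_n = \{f : f \text{ vanishes somewhere in } D_n\}$ is closed. This is the step where I expect the main (though modest) obstacle to lie. The cleanest route is to observe that the functional $m_n : f \mapsto \min_{|z| \le n}|f(z)|$ is continuous on $\Hol$, being $1$-Lipschitz for the seminorm $\sup_{|z|\le n}|\cdot|$ by the reverse triangle inequality, so that $Z_n = m_n^{-1}(\{0\})$ is closed. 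Alternatively one invokes Hurwitz's theorem together with the compactness of $D_n$: a locally uniform limit of functions each vanishing in $D_n$ either vanishes in $D_n$ or is identically zero, and in either case lies in $Z_n$.

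Putting the pieces together, $e^{\Hol} = \bigcap_{n} (\Hol \setminus Z_n)$ is a countable intersection of open sets, hence $G_\delta$, and intersecting it with the $G_\delta$ set $\{f : f(0) > 0\}$ shows that $e^{\Hol}_{+,0}$ is $G_\delta$. The separability, complete metrizability, and Baire property then follow from the general facts recorded in the first paragraph.
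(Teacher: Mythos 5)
Your proof is correct and follows essentially the same route as the paper: the paper writes $e^{\Hol}_{+,0}=\bigcap_k A_k$ with $A_k=\{f: \inf_{|z|\le k}|f(z)|>0,\ |\Im f(0)|<1/k,\ \Re f(0)>0\}$ open, which is exactly your two-factor decomposition with the conditions bundled into one countable intersection, and derives the ``in particular'' clause from Alexandroff's theorem, hereditary separability, and the Baire property of completely metrizable spaces just as you do. No gaps.
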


\begin{proof}
The second part is a consequence of the first one. Indeed,  Alexandroff's theorem (see e.g.~\cite[pp.~47--48]{oxtoby1980}) asserts
that a $G_\delta$-set is homeomorphic to a complete metrizable space; separability is inherited by any subspace of a separable metrizable space;
finally, any completely metrizable space is a Baire space (see e.g.~\cite{munkres2000}). As for the first part, simply put
\,$e^{\Hol}_{+,0} = \bigcap_{k=1}^\infty A_k$, where \,$A_k$ \,is defined as
$$
A_k := \left\{f\in\Hol :\, \inf_{|z|\leq k}|f(z)|>0, \, |{\rm Im} f(0)| < {1 \over k} \text{ and }\, {\rm Re} f(0) > 0\right\}.
$$
That each \,$A_k$ \,is open follows from the facts that \,$\{z\in\mathbb{C}\,:\,|z| \le k\}$ \,is compact and that both
evaluation mappings \,$f \in \Hol \mapsto {\rm Re} f(0) \in \R$,
$f \in \Hol \mapsto {\rm Im} f(0) \in \R$ \,are continuous.
\end{proof}

\begin{lemma} \label{Lemma: onto homeomorphisms}
For each \,$m \in \Z \setminus \{0\}$, the mapping \,$\Phi_m$ \,is an onto homeomorphism.
\end{lemma}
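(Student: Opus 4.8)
The mapping $\Phi_m$ has already been observed to be a continuous bijective homomorphism of $e^{\Hol}_{+,0}$ onto itself, so the only thing that remains is to check that its inverse is continuous; once this is done, $\Phi_m$ is an onto homeomorphism. The plan is to produce an explicit formula for $\Phi_m^{-1}$ and then prove the continuity of its ingredients. To this end I would first introduce the logarithm map $L : e^{\Hol}_{+,0} \to \Hol$ that assigns to each $g$ the unique entire function $h = L(g)$ with $e^h = g$ and $h(0) = \log\big(g(0)\big)$ (the ordinary real logarithm of the positive number $g(0)$); such an $h$ exists because $g$ is non-vanishing and $\C$ is simply connected, and it is unique once its value at $0$ is fixed. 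A direct computation then shows that, for every $m \in \Z \setminus \{0\}$,
$$
\Phi_m^{-1}(g) = \exp\left(\frac{1}{m} L(g)\right),
$$
since the right-hand side is non-vanishing, takes the value $g(0)^{1/m} > 0$ at the origin, and its $m$-th power equals $e^{L(g)} = g$.

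Because $\Phi_m^{-1} = E_m \circ L$, where $E_m(h) := \exp(h/m)$, it suffices to prove that both $L$ and $E_m$ are continuous for the topology of uniform convergence on compacta. Continuity of $E_m$ is routine: if $h_n \to h$ uniformly on a compact set $K$, the values $h_n(z)/m$ remain in a fixed bounded region on which $\exp$ is uniformly continuous, whence $E_m(h_n) \to E_m(h)$ uniformly on $K$. For $L$ I would use the representation
$$
L(g)(z) = \log\big(g(0)\big) + \int_0^z \frac{g'(\zeta)}{g(\zeta)}\, d\zeta ,
$$
the integral being path-independent since $g'/g$ is entire. Now suppose $g_n \to g$ in $e^{\Hol}_{+,0}$. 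Then $g_n(0) \to g(0) > 0$ gives $\log\big(g_n(0)\big) \to \log\big(g(0)\big)$, and by the Weierstrass theorem $g_n' \to g'$ uniformly on compacta.

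The crux, and the step I expect to require the most care, is to control the quotient $g_n'/g_n$. Fix a compact set $K$; since $g$ is continuous and never zero on $K$, one has $\delta := \min_{z \in K} |g(z)| > 0$, and for $n$ large the uniform estimate $|g_n - g| < \delta/2$ on $K$ forces $|g_n| \geq \delta/2$ there. Combined with $g_n' \to g'$, this yields $g_n'/g_n \to g'/g$ uniformly on $K$, hence the integrals converge uniformly for $z$ ranging over any compact set, and $L(g_n) \to L(g)$ in $\Hol$. This establishes the continuity of $L$, and therefore of $\Phi_m^{-1} = E_m \circ L$, completing the proof. I note that a softer route is also available: since $e^{\Hol}_{+,0}$ is, by Lemma~\ref{Lemma: topology of e+0}, a separable completely metrizable topological group and $\Phi_m$ is a continuous surjective group homomorphism, the open mapping theorem for Polish groups (a consequence of Pettis's theorem) immediately gives that $\Phi_m$ is open, hence a homeomorphism. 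The explicit argument above is preferred here because it is elementary and self-contained.
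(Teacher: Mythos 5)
Your proof is correct, but it takes a genuinely different route from the paper's. The paper argues softly: by Lemma~\ref{Lemma: topology of e+0}, $e^{\Hol}_{+,0}$ is a completely metrizable topological group, and $(\Phi_m)^{-1}$ is a group homomorphism of it into itself; the paper then invokes the abstract closed graph theorem for such homomorphisms (citing Christensen) and verifies that the graph of $(\Phi_m)^{-1}$ is closed, which follows in one line from the continuity of $\Phi_m$ and uniqueness of limits. Your argument is instead explicit and elementary: you exhibit the inverse as $g \mapsto \exp\left(\tfrac{1}{m}L(g)\right)$ with $L(g)(z) = \log\bigl(g(0)\bigr) + \int_0^z g'/g$, and prove continuity of $L$ directly via Weierstrass convergence of derivatives and the lower bound $|g_n| \ge \delta/2$ on compacta; all steps check out (in particular the identity $\bigl(\exp(L(g)/m)\bigr)^m = g$ and the positivity at the origin hold for negative $m$ as well). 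What each approach buys: the paper's is shorter and leans on the same structural fact (complete metrizability of $e^{\Hol}_{+,0}$) already established for other purposes, while yours is self-contained, avoids the closed graph theorem for topological groups entirely, and produces a usable formula for $\Phi_m^{-1}$. Your closing remark about the open mapping theorem for Polish groups is essentially the same Baire-category flavor as the paper's actual proof, so you have in effect identified both routes.
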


\begin{proof}
The unique property to be proved is that every \,$\Phi_m$ \,has inverse continuous. Since \,$\Phi_{-1}$ \,is an onto
homeomorphism, it is enough to see that,
given \,$m \in \N$ \,with \,$m \ge 2$, the mapping \,$\Psi = (\Phi_m)^{-1}: e^{\Hol}_{+,0} \to e^{\Hol}_{+,0}$ \,is continuous.
By Lemma \ref{Lemma: topology of e+0}, $e^{\Hol}_{+,0}$ is a completely metrizable topological group.
Note that \,$\Psi$ \,is a homomorphism from the group \,$e^{\Hol}_{+,0}$ \,into itself.
According to the abstract closed graph theorem (see, e.g., \cite[Theorem 5.2]{christensen}), it is enough to
show that \,$\Psi$ \,has closed graph. To this end, assume that \,$(f_k) \subset e^{\Hol}_{+,0}$ \,is a sequence
such that there are \,$f,g \in e^{\Hol}_{+,0}$ \,with \,$(f_k, \Psi (f_k)) \to (f,g)$ \,as \,$k \to \infty$ \,in the
topology product. Then \,$f_k \to f$ \,and \,$\Psi (f_k) \to g$.
The continuity of \,$\Phi_m$ \,yields \,$f_k = \Phi_m \Psi (f_k) \to \Phi_m (g)$, and the uniqueness of the limit implies
\,$f = \Phi_m (g)$ \,or, that is the same, $g = \Psi (f)$, which proves that the graph of \,$\Psi$ \,is closed, as required.
\end{proof}

Finally, we will use in the next section the following theorem, that is contained in \cite[Theorem 5]{bernal1997}.
This theorem is, in turn, an extension of a result about inherited $D$-hypercyclicity due to Herzog \cite{herzog1994}.

\begin{theorem} \label{Th: Bernal 1997}
If \,$\Phi$ \,is a nonconstant entire function of subexponential type, then the set \,$HC(\Phi (D)) \cap e^{\Hol}$ \,is residual in \,$e^{\Hol}$.
\end{theorem}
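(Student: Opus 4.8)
The plan is to run a Baire category argument in the space $e^{\Hol}$ itself, reducing the whole statement to a single approximation lemma whose zero-free constraint is the real difficulty.

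First I would record that $e^{\Hol}$ is a Baire space. Exactly as in the proof of Lemma \ref{Lemma: topology of e+0}, one has $e^{\Hol}=\bigcap_{k\ge 1}\{f\in\Hol:\ \inf_{|z|\le k}|f(z)|>0\}$, a countable intersection of open sets; hence $e^{\Hol}$ is a $G_\delta$ subset of the Fr\'echet space $\Hol$, so by Alexandroff's theorem it is completely metrizable and therefore Baire. Next, since $\Phi$ is nonconstant, $\Phi(D)$ is a non-scalar convolution operator, hence hypercyclic on the separable F-space $\Hol$ by the Godefroy--Shapiro theorem. Fixing a countable base $\{U_j\}_{j\ge1}$ of $\Hol$ and setting
$$V_j:=\bigcup_{n\ge1}\Phi(D)^{-n}(U_j),$$
hypercyclicity gives that each $V_j$ is open and dense in $\Hol$ and that $HC(\Phi(D))=\bigcap_j V_j$. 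Consequently $HC(\Phi(D))\cap e^{\Hol}=\bigcap_j\big(V_j\cap e^{\Hol}\big)$ is a $G_\delta$ subset of $e^{\Hol}$, and since $e^{\Hol}$ is Baire it suffices to prove that each $V_j\cap e^{\Hol}$ is \emph{dense} in $e^{\Hol}$.

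Unwinding this density, the crux becomes a two-target approximation kept inside the zero-free functions: given $h\in e^{\Hol}$, a compact $K$, $\ve>0$, and a target $v$ on a compact $L$ with tolerance $\delta>0$, I must produce a \emph{nowhere-vanishing} entire $f$ with $\sup_K|f-h|<\ve$ and $\sup_L|\Phi(D)^n f-v|<\delta$ for some $n$. The linear skeleton is the Godefroy--Shapiro eigenvector structure: since a nonconstant entire function has dense range, both $\{\lambda:|\Phi(\lambda)|<1\}$ and $\{\lambda:|\Phi(\lambda)|>1\}$ are nonempty open sets, so the spans $E_\pm$ of the corresponding exponentials $e^{\lambda z}$ are dense, $\Phi(D)^n\to0$ on $E_-$, and there is a right inverse $S$ with $S^n\to0$ on $E_+$ and $\Phi(D)S=I$ there. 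The textbook vector $a+S^n b$, with $a\in E_-$ approximating $h$ on $K$ and $b\in E_+$ approximating $v$ on $L$, satisfies the two approximations for large $n$ --- but it is a finite combination of exponentials and will in general have zeros, so this is precisely where the argument must be upgraded.

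To secure non-vanishing I would pass to the exponential chart $\exp\colon g\mapsto e^{g}$, which maps $\Hol$ onto $e^{\Hol}$ as a covering map (fibres $g+2\pi i\Z$) and is thus a local homeomorphism; residuality is a local, countably localizable property, and the set $\exp^{-1}(HC(\Phi(D)))$ is invariant under the deck translations $g\mapsto g+2\pi ik$, so $HC(\Phi(D))\cap e^{\Hol}$ is residual in $e^{\Hol}$ iff $\{g:\ e^{g}\in HC(\Phi(D))\}$ is residual in $\Hol$. The decisive gain of this reformulation is that a perturbation of $g$ to $g+\eta$ automatically keeps $e^{g}$ zero-free, while to first order it moves $e^{g}$ by $e^{g}\eta$; since $e^{g}$ is invertible in $\Hol$, the admissible perturbation directions $e^{g}\cdot\Hol$ exhaust all of $\Hol$. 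Thus the source approximation can be realized through zero-free functions, and writing $h=e^{g_0}$ I would seek $g=g_0+\eta$ with $\eta$ small on $K$ for which $\Phi(D)^n(e^{g})$ reaches $v$ on $L$, carrying out the construction by Runge's theorem on compacta with connected complement together with the fact that the subexponential type of $\Phi$ makes $\Phi(D)$ a continuous operator to which such approximation applies. The hard part will be exactly this simultaneous approximation under the zero-free constraint: because the iterates $\Phi(D)^n$ do not preserve $e^{\Hol}$, orbits leave the space and one cannot argue by transitivity inside $e^{\Hol}$; it is the subexponential hypothesis that lets the Runge construction be pushed through while holding $f$ nowhere zero.
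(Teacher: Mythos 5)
The paper does not actually prove this statement: it imports it verbatim from \cite[Theorem 5]{bernal1997}, which in turn extends Herzog's construction of zero-free universal entire functions \cite{herzog1994}. So your proposal must stand on its own as a proof, and as such it has a genuine gap: all of your reductions are correct, but the single step that carries the mathematical content of the theorem is announced rather than carried out.

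Concretely, the framework is sound. That $e^{\Hol}$ is a $G_\delta$ in $\Hol$ (hence Polish and Baire), that $HC(\Phi(D))\cap e^{\Hol}$ is a $G_\delta$ in $e^{\Hol}$ so that residuality reduces to the density of each $V_j\cap e^{\Hol}$, and that the covering map $\exp\colon\Hol\to e^{\Hol}$ (a local homeomorphism, with residuality localizable via the Banach category theorem) converts the problem into the dense universality in $\Hol$ of the \emph{nonlinear} maps $g\mapsto \Phi(D)^n(e^{g})$ --- all of this is fine and is indeed the right skeleton. But you are then obliged to prove the approximation lemma: given $g_0$, a compact $K$, $\ve>0$, a target $v$, a compact $L$ and $\delta>0$, produce $\eta$ with $\sup_K|\eta|<\ve$ and an $n$ with $\sup_L|\Phi(D)^n(e^{g_0+\eta})-v|<\delta$. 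Your text correctly discards the Godefroy--Shapiro vector $a+S^nb$ (it has zeros), then offers only a first-order heuristic (the tangent directions $e^{g}\cdot\Hol$ fill out $\Hol$, which says nothing about hitting a prescribed $v$ after $n$ iterates of $\Phi(D)$) and a gesture toward Runge's theorem and the subexponential hypothesis, before stating that ``the hard part will be exactly this simultaneous approximation.'' That hard part is the theorem: it is where the hypothesis $\tau(\Phi)=0$ must enter in a verifiable way (for instance through the fact, recorded in Section 2, that subexponential type makes $\Phi(D)$ act on $\mathcal H(G)$ for every domain $G$, which is what permits localization; the statement is false for $\Phi(z)=e^{az}$, consistent with $f^2\notin HC(\tau_a)$). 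Without an explicit construction for this lemma, what you have is a correct reduction of the theorem to its essential difficulty, not a proof of it.
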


Additional statements on zero-free $D$-hypercyclic entire functions can be found in \cite{bernalbonillacostakis2012}.

\section{Groups of hypercyclic functions}

\quad Firstly, we show that the family of entire functions $f$ that are hypercyclic with respect to certain convolution operators (including $D$)
satisfying $f(0) > 0$ is topologically large.

\begin{proposition} \label{Prop: f(0) > 0 residual}
If \,$\Phi$ \,is a nonconstant entire function of subexponential type, then the set \,$HC(\Phi (D)) \cap e^{\Hol}_{+,0}$ \,is residual in \,$e^{\Hol}_{+,0}$.
\end{proposition}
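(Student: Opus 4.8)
The plan is to transfer the residuality of Theorem~\ref{Th: Bernal 1997} from the whole group $e^{\Hol}$ down to the slice $e^{\Hol}_{+,0}$, using the rotational invariance of hypercyclicity as the bridge. The first thing I would record is that $HC(\Phi(D))$ is invariant under multiplication by unimodular constants: since $\Phi(D)$ is linear, $(\Phi(D))^n(cf)=c\,(\Phi(D))^nf$, so for $c\in\T$ the orbit of $cf$ is the image of the orbit of $f$ under the homeomorphism $h\mapsto ch$ of $\Hol$; hence $cf\in HC(\Phi(D))$ if and only if $f\in HC(\Phi(D))$. Consequently the exceptional set $N:=e^{\Hol}\setminus HC(\Phi(D))$ is $\T$-invariant, and by Theorem~\ref{Th: Bernal 1997} it is meager in $e^{\Hol}$.

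Next I would split off the argument of $f$ at the origin by a product description. Define $\Theta: e^{\Hol}_{+,0}\times\T \to e^{\Hol}$ by $\Theta(g,c)=cg$. This is a homeomorphism: it is clearly continuous and bijective, and its inverse $f\mapsto\bigl(\tfrac{|f(0)|}{f(0)}\,f,\ \tfrac{f(0)}{|f(0)|}\bigr)$ is continuous on $e^{\Hol}$ because there $f\mapsto f(0)$ takes values in $\C\setminus\{0\}$. By the $\T$-invariance of $N$, its preimage is a full cylinder, $\Theta^{-1}(N)=M_0\times\T$, where $M_0:=N\cap e^{\Hol}_{+,0}=e^{\Hol}_{+,0}\setminus HC(\Phi(D))$; since $\Theta$ is a homeomorphism, $M_0\times\T$ is meager in $e^{\Hol}_{+,0}\times\T$.

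Finally I would descend from the product to the first factor. By Lemma~\ref{Lemma: topology of e+0} the space $e^{\Hol}_{+,0}$ is separable and completely metrizable, and $\T$ is second countable and Baire, so the Kuratowski--Ulam theorem applies: a meager subset of $e^{\Hol}_{+,0}\times\T$ has a meager vertical section at comeagerly many points of the first factor. Here the sections of $M_0\times\T$ are either all of $\T$ (non-meager) or empty, so $\{g : (M_0\times\T)_g \text{ meager in }\T\}=e^{\Hol}_{+,0}\setminus M_0$ is comeager, forcing $M_0$ to be meager in $e^{\Hol}_{+,0}$. Equivalently, $HC(\Phi(D))\cap e^{\Hol}_{+,0}$ is residual, as asserted.

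The point requiring care, and the reason one cannot merely intersect a dense $G_\delta$ of $e^{\Hol}$ with $e^{\Hol}_{+,0}$, is that $e^{\Hol}_{+,0}$ is a nowhere dense slice of $e^{\Hol}$ (one can always rotate $f(0)$ off the positive real axis), so density is destroyed by naive restriction. The rotational invariance of $N$ is exactly what repairs this, and Kuratowski--Ulam is the mechanism converting invariance plus meagerness in the product into meagerness on the slice; checking its hypotheses (second countability, furnished by Lemma~\ref{Lemma: topology of e+0}) is the only genuine checkpoint.
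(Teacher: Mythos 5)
Your proof is correct, and it takes a genuinely different route from the paper's. Both arguments hinge on the same two observations -- that $HC(\Phi(D))$ is invariant under multiplication by nonzero scalars and that $f \mapsto \frac{|f(0)|}{f(0)}f$ normalizes the value at the origin -- but they exploit them differently. The paper uses that normalization only as a continuous surjective retraction $\Pi : e^{\Hol} \to e^{\Hol}_{+,0}$: it pushes the dense set $HC(\Phi(D)) \cap e^{\Hol}$ of Theorem~\ref{Th: Bernal 1997} forward to a dense subset of $HC(\Phi(D)) \cap e^{\Hol}_{+,0}$, and then upgrades density to residuality by observing (via Birkhoff's transitivity theorem applied to the restricted sequence $((\Phi(D))^n|_{e^{\Hol}_{+,0}})$) that this set is $G_\delta$ in the Baire space $e^{\Hol}_{+,0}$. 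You instead promote the normalization to a full homeomorphism $e^{\Hol}_{+,0} \times \T \cong e^{\Hol}$, under which $\T$-invariance of the exceptional set $N$ makes its preimage a cylinder $M_0 \times \T$, and then descend meagerness to the first factor by Kuratowski--Ulam (whose hypothesis -- second countability of the fiber $\T$ -- is trivially met, and whose conclusion identifies $e^{\Hol}_{+,0}\setminus M_0$ as comeager since the nonempty sections are all of the non-meager space $\T$). Your version transfers residuality in one shot and works for \emph{any} $\T$-invariant residual subset of $e^{\Hol}$, not just sets of hypercyclic vectors, at the cost of invoking Kuratowski--Ulam; the paper's version stays within standard hypercyclicity machinery but must separately re-establish the $G_\delta$ structure inside the slice. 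Your closing remark correctly identifies why naive restriction fails: $e^{\Hol}_{+,0}$ is not open in $e^{\Hol}$, so residuality does not pass to it by mere intersection.
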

\begin{proof}
According to Birkhoff transitivity theorem (see e.g.~\cite{grosseperis2011}), if $T_n : X \to Y$ $(n \in \N )$ is a sequence of continuous mappings between
two Hausdorff topological spaces, with $X$ Baire and first-countable and $Y$ second-countable, then the set \,$\mathcal U = \mathcal U ((T_n))$ \,of points $x \in X$ whose orbit \,$\{T_n x : \, n \in \N\}$ \,is dense in $Y$ is a $G_\delta$ subset of $X$; consequently, $\mathcal U$ \,is residual as soon as it is dense.
By Theorem \ref{Th: Bernal 1997}, the set \,$HC(\Phi (D)) \cap e^{\Hol}$ \,is residual in \,$e^{\Hol}$. It is easy to see that
the mapping
$$
\Pi : f \in e^{\Hol} \mapsto {|f(0)| \over f(0)} \cdot f \in  e^{\Hol}_{+,0}
$$
is continuous and onto. Then it takes dense sets into dense sets.
In particular, the image \,$\Pi (HC(\Phi (D)) \cap e^{\Hol})$ \,is dense in \,$e^{\Hol}_{+,0}$. Since any nonzero scalar multiple of a hypercyclic vector is also hypercyclic, we obtain \,$\Pi (HC(\Phi (D)) \cap e^{\Hol}) \subset HC(\Phi (D)) \cap e^{\Hol}_{+,0}$. But \,$\mathcal U := HC(\Phi (D)) \cap e^{\Hol}_{+,0}$
equals the set of elements \,$f \in X := e^{\Hol}_{+,0}$ \,whose orbit under the sequence \,$(T_n) := ((\Phi (D))^n|_{e^{\Hol}_{+,0}})$ \,is dense
in \,$Y := \Hol$ (note that $X$ is Baire and first-countable by Lemma \ref{Lemma: topology of e+0}). Therefore $\mathcal U$ is a $G_\delta$ dense subset of
\,$e^{\Hol}_{+,0}$, hence residual.
\end{proof}

Our main result (Theorem \ref{Th: groups in HC(Phi(D))}) will be deduced as a consequence of the following, more abstract, assertion.

\begin{theorem} \label{Th: groups in residual subsets}
Let \,$X$ be a separable infinite dimensional F-space. Assume that $X$ is also a unitary commutative linear algebra, and that the corresponding multiplication law $(f,g) \in X \times X \mapsto f * g \in X$ is continuous. Suppose also that \,$Z$ is a subset of \,$X$ fulfilling the following
conditions:
\begin{enumerate}
\item[\rm (i)] $Z$ is a $G_\delta$-subset of $X$.
\item[\rm (ii)] $(Z,*)$ is a topological group.
\item[\rm (iii)] The mappings $u \in Z \mapsto u^k \in Z$ $(k \in \N )$ are onto homeomorphisms.
\item[\rm (iv)] No nonempty relatively open subset of \,$Z$ is contained in a countable dimensional vector subspace of \,$X$.
\end{enumerate}
Then for each residual subset \,$\mathcal R$ of \,$Z$ there exists a subset \,$\mathcal G \subset Z$ satisfying the following properties:
\begin{enumerate}
\item[\rm (a)] $\mathcal G$ is a subgroup of \,$Z$.
\item[\rm (b)] $\mathcal G$ is dense in \,$Z$.
\item[\rm (c)] $\mathcal R \supset \mathcal G \setminus \{{\bf e}\}$, where \,${\bf e}$ \,denotes the unit element of \,$X$.
\item[\rm (d)] $\mathcal G$ is infinitely generated in a strong sense, namely, the algebra generated by \,$\mathcal G$
is infinitely generated.
\end{enumerate}
\end{theorem}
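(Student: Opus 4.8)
The plan is to realize $\mathcal G$ as the subgroup generated by a single sequence $(u_n)_{n\ge 1}\subset Z$ produced by an inductive Baire category argument carried out inside $Z$ itself. First I record the ambient properties I will use. By (i) and Alexandroff's theorem $Z$ is completely metrizable, and being a subspace of the separable space $X$ it is second countable; hence $Z$ is a separable Baire space with a countable base $\{B_l\}_{l\ge 1}$ of nonempty open sets. Since $X$ is a commutative algebra, $(Z,*)$ is an \emph{abelian} topological group; combining inversion (continuous by (ii)) with the power homeomorphisms of (iii), for every $k\in\Z\setminus\{0\}$ and every fixed $c\in Z$ the map $u\mapsto c*u^{k}$ is a self-homeomorphism of $Z$.

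The decisive reformulation of hypothesis (iv) is: for any \emph{countable dimensional} vector subspace $V\subset X$, the set $Z\setminus V$ is residual in $Z$. Indeed, write $V=\bigcup_m V_m$ as an increasing union of finite dimensional subspaces; each $V_m$ is closed in $X$ (finite dimensional subspaces of a Hausdorff topological vector space are closed), so $V_m\cap Z$ is closed in $Z$, and by (iv) it has empty interior, hence is nowhere dense. Thus $V\cap Z=\bigcup_m(V_m\cap Z)$ is meager and $Z\setminus V$ is residual. I will apply this to $V=\mathcal A_{n-1}$, the unital commutative subalgebra of $X$ generated by $u_1^{\pm1},\dots,u_{n-1}^{\pm1}$; being finitely generated it is spanned by countably many monomials, hence is countable dimensional, so $Z\setminus\mathcal A_{n-1}$ is residual.

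Now I choose $u_n$ inductively inside the intersection of two families of residual subsets of $Z$: (1) the sets $\phi^{-1}(\mathcal R)$, where $\phi(u)=u_1^{k_1}\cdots u_{n-1}^{k_{n-1}}*u^{k_n}$ runs over all $(k_1,\dots,k_{n-1})\in\Z^{n-1}$ and $k_n\in\Z\setminus\{0\}$ --- each such $\phi$ is a self-homeomorphism of $Z$, so $\phi^{-1}(\mathcal R)$ is residual, and there are only countably many of them; and (2) the residual set $Z\setminus\mathcal A_{n-1}$. Their countable intersection $S_n$ is residual, hence meets the nonempty open set $B_n$ (in a Baire space a comeager set meets every nonempty open set); I pick $u_n\in S_n\cap B_n$ and set $\mathcal G:=\langle u_n:n\ge1\rangle$.

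It remains to verify (a)--(d). Property (a) is immediate, as $Z$ is a group under $*$ and $\mathcal G\subset Z$. For (b), each $u_l\in B_l$, so the generating set $\{u_n\}$ is dense in $Z$, whence so is $\mathcal G$. For (c), take $g\in\mathcal G\setminus\{{\bf e}\}$ and a representation $g=u_1^{k_1}\cdots u_m^{k_m}$; this representation cannot have all exponents zero (else $g={\bf e}$), so, putting $n=\max\{i:k_i\ne0\}$, we have $g=u_1^{k_1}\cdots u_n^{k_n}$ with $k_n\ne0$, which by step (1) at stage $n$ lies in $\mathcal R$. Finally, for (d), step (2) forces $u_n\notin\mathcal A_{n-1}$, so the subalgebras form a strictly increasing chain $\mathcal A_0\subsetneq\mathcal A_1\subsetneq\cdots$ whose union is the algebra generated by $\mathcal G$; a finitely generated algebra would sit inside some $\mathcal A_N$, contradicting strictness, so this algebra is infinitely generated. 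The step demanding the most care is the use of (iv): reducing it to the meagerness of $\mathcal A_{n-1}\cap Z$ through the finite dimensional (hence closed) pieces is exactly what lets the two competing demands --- every nontrivial group word landing in $\mathcal R$ and the subalgebra chain never stabilizing --- be enforced simultaneously at each inductive step, everything else being routine bookkeeping over a countable family of residual conditions in the Baire space $Z$.
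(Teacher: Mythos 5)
Your proposal is correct and follows essentially the same route as the paper: an inductive Baire-category selection of generators $u_n$ from a countable base, using that the maps $u\mapsto c*u^{k}$ ($k\in\Z\setminus\{0\}$) are self-homeomorphisms of $Z$ to make every nontrivial group word land in $\mathcal R$, and using hypothesis (iv) to keep each new generator outside the countable-dimensional algebra generated by its predecessors. The only (cosmetic) difference is that you package the avoidance condition as the residuality of $Z\setminus\mathcal A_{n-1}$, where the paper instead argues directly that $G_{n+1}\setminus L_n$ is of second category; these are the same fact.
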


\begin{proof}
By Alexandroff's theorem and by the fact that $X$ is a separable F-space, we have (thanks to (i)) that $Z$ is a completely metrizable space that,
in addition, is second-countable. Then $Z$ is a Baire space and there is a countable open basis
\,$\{G_n\}_{n \ge 1}$ for the topology of \,$Z$. From (ii) and (iii) one takes out that every mapping \,$\Psi_k : u \in Z \mapsto u^k \in Z$
$(k \in \Z \setminus \{0\})$ \,is an onto homeomorphism. Moreover, it follows from (ii) that, for every \,$v \in \Z$,
the multiplication mapping \,$M_{v} : u \in Z \mapsto v * u \in Z$ \,is also an onto homeomorphism.

\vskip 5pt

Fix a residual subset \,$\mathcal R$ of \,$Z$. Then, for each $k \in \Z \setminus \{0\}$,
the set \,$\Psi_k^{-1} (\mathcal R)$ \,is residual in $Z$. Then the countable intersection
$$
\mathcal R_0 := \bigcap_{k \in \Z \setminus \{0\}} \Psi_k^{-1} (\mathcal R)
$$
is also residual --and, in particular, dense-- in \,$Z$. Let us set $L_0 := {\rm span} \{{\bf e}\}$, which equals the algebra generated by ${\bf e}$.
Since $L_0$ is a finite dimensional subspace, it is closed in $X$. Then $G_1 \setminus L_0$ is a nonempty
(otherwise, $G_1 \subset L_0$, contradicting (iv)) open set in the topology of \,$Z$. Then there exists $v_1 \in G_1 \cap \mathcal R_0$.

\vskip 5pt

Let us proceed by induction. Assume that, for some $n \in \N$, the vectors $v_1, \dots ,v_n$ have been selected.
Let $L_n$ denote the linear algebra generated by $v_0,v_1, \dots ,v_n$, that is, $L_n = {\rm span} \{v_1^{j_n} * \cdots * v_n^{j_n}: \, (j_1, \dots ,j_n) \in \N_0^n\}$, where $\N_0 := \N \cup \{0\}$. Observe that \,$L_n$ \,is a countable dimensional vector subspace of $X$. Since $X$ is a Baire space, if follows that $L_n$ is an $F_\sigma$ set with empty interior. Now, we set
$$
\mathcal R_n := \bigcap_{k \in \Z \setminus \{0\}} \Psi_k^{-1} \bigg( \bigcap_{(k_1, \dots ,k_n) \in \Z^n}
M^{-1}_{v_1^{k_1} * \cdots * v_n^{k_n}} (\mathcal R) \bigg) .
$$
Observe that the choice $(k_1, \dots ,k_n) = (0, \dots ,0)$ gives $\mathcal R_n \subset \mathcal R_0$.
Since the $\Psi_k$'s as well as the $M^{-1}_{v_1^{k_1} * \cdots * v_n^{k_n}}$'s are onto homeomorphisms and countable intersections
of residual sets are residual, we get that each \,$\mathcal R_n$ \,is residual in \,$Z$. Assume, by way of contradiction, that
$G_{n+1} \setminus L_n$ is of first category (in the sense of Baire) in $Z$. Then $G_{n+1} = (G_{n+1} \setminus L_n) \cup (G_{n+1} \cap L_n)$ and, by the assumption (iv), $L_n \cap Z$ is a (relatively $F_\sigma$) set with empty interior in $Z$, so of first category in $Z$. This implies that its subset $G_{n+1} \cap L_n$ is also of first category in $Z$, hence $G_{n+1}$ is, which is absurd because $Z$ is a Baire space. Thus,
$G_{n+1} \setminus L_n$ is of second category (in the sense of Baire) in $Z$. As $\mathcal R_n$ is residual in $Z$, we have
$\mathcal R_n \cap (G_{n+1} \setminus L_n) \ne \varnothing$. Therefore, we can select $v_{n+1} \in \mathcal R_n \cap (G_{n+1} \setminus L_n)$ and the induction procedure is finished.

\vskip 5pt

Define \,$\mathcal G$ \,as the group generated by \,$\{v_n\}_{n \in \N}$, that is
$$
\mathcal G = \big\{ v_1^{k_1} * \cdots * v_n^{k_n} : \, (k_1, \dots ,k_n) \in \Z^n, \, n \in \N \big\} .
$$
Since each $v_n$ belongs to $Z$ and $v_n \in G_n$ $(n \in \N )$, we obtain conclusions (a) and (b). Property (c) is derived from the fact that, by
construction, each combination $v_1^{k_1} * \cdots * v_n^{k_n} \in \mathcal R$ as soon as $(k_1, \dots ,k_n) \in \Z^n \setminus \{(0, \dots ,0)\}$.
Finally, property (d) is true because the algebra generated by $\mathcal G$ contains the algebra $\mathcal A := \bigcup_{n \ge 0} L_n$, and $\mathcal A$ is infinitely generated. Indeed, assume that $\mathcal A$ is generated by some finite subset $F \subset \mathcal A$. Since $L_n \subset L_{n+1}$ $(n \ge 0)$, there is $N \in \N$ such that $F \subset L_N$. Then $L_N$ generates $\mathcal A$. But $L_N$ is itself an algebra, so $\mathcal A = L_N$, which is absurd because $v_{N+1} \in \mathcal A \setminus L_N$. Hence $\mathcal A$ cannot be finitely generated, which concludes the proof.
\end{proof}

\begin{theorem} \label{Th: groups in HC(Phi(D))}
Assume that \,$\Phi$ \,is a nonconstant entire function of subexponential type.
Consider the convolution operator \,$T = \Phi (D)$ \,associated to \,$\Phi$.
Then there exists an infinitely generated multiplicative group \,$\mathcal G \subset \mathcal H (\C )$
\,such that \,every member of \,$\mathcal G$ --except for {\bf 1}--
is $\Phi (D)$-hypercyclic. Moreover, $\mathcal G$ \,is a
dense subgroup of \,$e^{\Hol}_{+,0}$ \,and the algebra generated by \,$\mathcal G$ \,is infinitely generated.
\end{theorem}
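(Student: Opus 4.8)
The plan is to obtain this statement as a direct application of Theorem \ref{Th: groups in residual subsets}, so the entire argument reduces to checking that the hypotheses of that abstract result hold for the appropriate choices. I would take $X = \Hol$, equipped with pointwise multiplication as the algebra law $*$ and with unit element $\mathbf{e} = \mathbf{1}$; as the distinguished subset I would take $Z = e^{\Hol}_{+,0}$; and as the residual subset I would take $\mathcal R = HC(\Phi(D)) \cap e^{\Hol}_{+,0}$. Once the hypotheses are verified, conclusions (a)--(d) of Theorem \ref{Th: groups in residual subsets} translate verbatim into the desired statement: (a) and (b) say that $\mathcal G$ is a dense subgroup of $e^{\Hol}_{+,0}$; (c) says that $\mathcal G \setminus \{\mathbf 1\} \subset HC(\Phi(D))$, i.e., every member except $\mathbf 1$ is $\Phi(D)$-hypercyclic; and (d) says that the algebra generated by $\mathcal G$ is infinitely generated.

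The structural hypotheses are mostly already in hand. The space $\Hol$ is a separable infinite-dimensional F-space under uniform convergence on compacta, and it is a commutative unitary algebra under pointwise multiplication with continuous product and unit $\mathbf 1$; all of this is routine. Condition (i) is exactly Lemma \ref{Lemma: topology of e+0}. For (ii), $e^{\Hol}_{+,0}$ is a subgroup of the topological group $e^{\Hol}$ (the product and the reciprocal of functions positive at $0$ are again positive at $0$) carrying the subspace topology, so it is itself a topological group with unit $\mathbf 1 \in Z$. Condition (iii) is Lemma \ref{Lemma: onto homeomorphisms}, which gives that each $\Phi_k$, $k \in \N$, is an onto homeomorphism. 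Finally, $\mathcal R$ is residual in $Z$ by Proposition \ref{Prop: f(0) > 0 residual}.

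The one hypothesis requiring a genuine argument --- and the main obstacle --- is (iv): no nonempty relatively open subset of $Z$ is contained in a countable-dimensional vector subspace of $X$. I would argue by contradiction. Let $U = Z \cap W$ be nonempty and relatively open in $Z$, with $W$ open in $\Hol$, and suppose $U \subset V$ for some subspace $V$ of $\Hol$ with $\dim V \le \aleph_0$. Fix $f_0 \in U$ and choose $R>0$ and $\ve > 0$ so that $\{f \in \Hol : \sup_{|z| \le R} |f(z) - f_0(z)| < \ve\} \subset W$. Consider the family $f_\lambda := f_0 \cdot e^{\lambda z}$ for $\lambda \in \R$. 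Each $f_\lambda$ is non-vanishing with $f_\lambda(0) = f_0(0) > 0$, so $f_\lambda \in Z$; and since $|f_\lambda(z) - f_0(z)| \le \big(\max_{|z| \le R} |f_0(z)|\big)\, \max_{|z| \le R} |e^{\lambda z} - 1| \to 0$ as $\lambda \to 0$, there is $\delta > 0$ with $f_\lambda \in W$, hence $f_\lambda \in U$, for all $|\lambda| < \delta$. The key point is that $\{f_\lambda : |\lambda| < \delta\}$ is linearly independent: because $f_0$ never vanishes, any relation $\sum_i c_i f_{\lambda_i} = f_0 \sum_i c_i e^{\lambda_i z} = 0$ forces $\sum_i c_i e^{\lambda_i z} \equiv 0$, whence all $c_i = 0$ by the linear independence of distinct exponentials. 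Thus $U$ contains an uncountable linearly independent subset, so $\operatorname{span}(U)$ has uncountable dimension, contradicting $U \subset V$. This establishes (iv).

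With (i)--(iv) and the algebra structure verified, Theorem \ref{Th: groups in residual subsets} applied to the residual set $\mathcal R = HC(\Phi(D)) \cap e^{\Hol}_{+,0}$ (residual by Proposition \ref{Prop: f(0) > 0 residual}) yields a subgroup $\mathcal G \subset e^{\Hol}_{+,0}$ enjoying properties (a)--(d), which is precisely the infinitely generated, dense, multiplicative group of $\Phi(D)$-hypercyclic functions (apart from $\mathbf 1$) asserted in the statement, completing the proof.
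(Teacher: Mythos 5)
Your proposal is correct, and the overall strategy coincides with the paper's: both reduce the theorem to Theorem \ref{Th: groups in residual subsets} with $X=\Hol$, $Z=e^{\Hol}_{+,0}$, $*$ the pointwise product, ${\bf e}={\bf 1}$, $\mathcal R = HC(\Phi(D))\cap e^{\Hol}_{+,0}$, and both dispatch conditions (i)--(iii) via Lemma \ref{Lemma: topology of e+0}, Lemma \ref{Lemma: onto homeomorphisms} and Proposition \ref{Prop: f(0) > 0 residual}. Where you genuinely diverge is in the verification of condition (iv), which is the only nontrivial step. The paper argues structurally: if a nonempty relatively open $O\subset e^{\Hol}_{+,0}$ sat inside a countable-dimensional subspace $Y$, then $Y$ would be $\sigma$-compact, Baire's theorem would give ${\bf 1}$ a compact neighborhood in $e^{\Hol}_{+,0}$, and transporting this through the topological group isomorphism $e^g\mapsto g$ onto the infinite-dimensional subspace $S=\{g:\Im g(0)=0\}$ of $\Hol$ would contradict Riesz's theorem. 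You instead exhibit, inside any such $O$, the explicit uncountable family $f_\lambda = f_0\,e^{\lambda z}$ ($|\lambda|<\delta$), which lies in $Z$, stays in $O$ for small $\lambda$ by uniform convergence on the relevant disc, and is linearly independent because $f_0$ is zero-free and distinct exponentials are independent; this forces $\operatorname{span}(O)$ to have uncountable dimension. Your argument is more elementary (it avoids Alexandroff's theorem, the continuity analysis of the logarithm, and Riesz's theorem for topological vector spaces) and in fact proves something stronger, namely that $O$ is not contained in any subspace of dimension less than the continuum; the paper's route is longer but yields the structural byproduct that $e^{\Hol}_{+,0}$ is topologically isomorphic to the additive group of an infinite-dimensional F-space. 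Both verifications are sound, so your proof is complete.
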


\begin{proof}
It suffices to apply Theorem \ref{Th: groups in residual subsets}
with the following ``characters'': Take $X := \Hol$, $Z := e^{\Hol}_{+,0}$, $* :=$ the
usual multiplication, ${\bf e} := {\bf 1}$ \,and \,$\mathcal R := HC(\Phi (D)) \cap e^{\Hol}_{+,0}$.
Now, $Z$ is a topological group for which the mappings $f \mapsto f^k$ $(k \in \N )$ are onto
homeomorphisms by Lemma \ref{Lemma: onto homeomorphisms}, $Z$ is a $G_\delta$ subset of $X$ by Lemma \ref{Lemma: topology of e+0}, and \,$\mathcal R$ \,is residual
in \,$Z$ \,by Proposition \ref{Prop: f(0) > 0 residual}.

\vskip 5pt

Now, assume that \,$O$ \,is a nonempty open subset in \,$e^{\Hol}_{+,0}$ \,and that, by way of contradiction,
\,$Y$ \,is a countable dimensional space of \,$\Hol$ \,with \,$O \subset Y$. Then \,${\rm span} \{h_n\}_{n \ge 1} = Y$
\,for certain \,$h_n \in \Hol$ $(n \ge 1)$. Setting \,$Y_n:= {\rm span} \{h_1, \dots ,h_n\}$ \,we get \,$Y = \bigcup_{n \ge 1} Y_n$.
But each \,$Y_n$ \,is a finite dimensional subspace, hence $\sigma$-compact, and so \,$Y$ \,is. In other words, there
are countably many compact sets \,$K_n \subset e^{\Hol}_{+,0}$ $(n \in \N )$ \,such that \,$Y = \bigcup_{n \ge 1} K_n$.
Therefore each set
$Z_n := e^{\Hol}_{+,0} \cap K_n$ \,is compact (hence closed) in \,$e^{\Hol}_{+,0}$ and \,$O \subset \bigcup_{n \ge 1} Z_n$.
Since \,$e^{\Hol}_{+,0}$ \,is a completely metrizable space, Baire's category theorem
tells us that at least one \,$Z_m$ \,has nonempty interior in \,$e^{\Hol}_{+,0}$.
Now, the group structure of \,$e^{\Hol}_{+,0}$ entails that the function {\bf 1} possesses a compact neighborhood \,$W$ \,in $e^{\Hol}_{+,0}$.

\vskip 5pt

Now, consider the vector space \,$S := \{g \in \Hol : \, {\rm Im} \, g(0) = 0\}$ \,endowed with the topology inherited from $\Hol$.
Trivially, $S$ is a topological group for the operation ``+'' and the same topology.
For every $f \in e^{\Hol}_{+,0}$
there is a unique $g \in S$ such that $f = e^g$. Hence the mapping \,$T : f = e^g \in (e^{\Hol}_{+,0}, \cdot ) \mapsto g \in (S,+)$
\,is an algebraic group isomorphism. But it is in fact a topological isomorphism. Indeed, $T^{-1} : g \mapsto e^g$ is trivially continuous
(any superposition mapping $g \in \Hol \mapsto \varphi \circ g \in \Hol$, with $\varphi \in \Hol$, is continuous), and $T$ is continuous
because it is continuous at the neutral element {\bf 1}. This means that if a sequence $(f_n = e^{g_n}) \subset e^{\Hol}_{+,0}$
(with $g_n = u_n + iv_n \in S$, so that $v_n(0) = 0$ for all $n$) satisfies $f_n \to {\bf 1}$ then $g_n = u_n + iv_n = T(f_n) \to T({\bf 1}) = 0$
uniformly on compacta. This, in turn, follows by invoking the continuity of the principal
branch $\log_p w := {\rm ln} \, |w| + i \, {\rm arg}_p \, w$ of the logarithm on $\C \setminus (-\infty ,0]$.
Indeed, since $1 \in \C \setminus (-\infty ,0]$, for given $R > 0$ we have that $f_n (K) \subset \C \setminus (-\infty ,0]$
eventually, where $K = \{z: \, |z| \le R\}$. Then $\log_p (f_n) \to log_p 1 =0$ uniformly on $K$.
But \,$\log_p (f_n (z)) = u_n (z) + i \arg_p (e^{iv_n (z)}) = u_n (z) + i(v_n(z) + 2 k_n \pi )$
\,for certain $k_n \in \Z$ not depending on $z$. Hence $u_n \to 0$ and $v_n + 2k_n\pi  \to 0$ uniformly on $K$.
In particular, $2k_n\pi  = v_n(0) + 2k_n\pi \to 0$, so $k_n \to 0$ and, since $k_n \in \Z$, we get $k_n = 0$ eventually.
This implies $v_n \to 0$ uniformly on $K$, and so does $g_n = u_n + iv_n$ as $n \to \infty$, as required.
We have proved that \,$T$ \,is a homeomorphism. Thus \,$T(W)$ \,is a compact neighborhood of \,$0$ \,in \,$S$.
But Riesz's theorem (see, e.g., \cite[page 147, Theorem 3]{horvath1966}) implies that \,${\rm dim} \, (S) < \infty$, which is absurd
because \,$S$ \,contains all functions \,$z^n$ $(n \in \N )$.
This is the desired contradiction.

\vskip 5pt

Hence \,$O$ is not contained in any countable dimensional subspace of $\Hol$.
Consequently, conditions (i)--(iv) in Theorem \ref{Th: groups in residual subsets} are fulfilled and the conclusion follows.
\end{proof}

Another important class of operators is the one formed by the composition operators.
If $G \subset \C$ is a domain and $\varphi : G \to G$ is a holomorphic self-map of $G$, the mapping
$$
C_\varphi : f \in \mathcal (G) \mapsto f \circ \varphi \in H(G)
$$
is well defined, linear and continuous, and it is called
the {\it composition operator} \,with symbol \,$\varphi$.
By \,${\rm Aut} (G)$ \,we denote the group of automorphisms of \,$G$, i.e., the family of all bijective holomorphic functions \,$G \to G$.
The translations $\tau_a$ $(a \in \C )$ form a special instance when $G = \C$ and $\varphi (z) := z + a$.
We have already seen that, even for translations, the set of hypercyclic functions is not stable under products
($f^2$ is never $\tau_a$-hypercyclic, whatever $f$ is).
Nevertheless, we obtain a reasonably degree of stability if the products do not allow repetition of factors.
This can be done for many composition operators, as the following theorem --with which we conclude this paper-- shows.
Recall that a domain $G \subset \C$ is called simply connected if it lacks holes, that is, if $\C_\infty \setminus G$ is connected.

\begin{theorem}
Let \,$G \subset \C$ \,be a simply connected domain. Assume that \,$\varphi \in {\rm Aut} (G)$ \,and \,$\varphi$ \,has no fixed points.
Then for every \,$f \in HC(C_\varphi )$  
\,there is a family \,$\mathcal F \subset H(G)$ satisfying the following properties:
\begin{enumerate}
\item[\rm (a)] $f \in \mathcal F \subset HC(C_\varphi )$.
\item[\rm (b)] $\mathcal F$ is dense in $H(G)$.
\item[\rm (c)] For each finite nonempty subfamily \,$\mathcal F_0 \subset \mathcal F$, one has \,$\prod_{g \in \mathcal F_0} g \in HC(C_\varphi )$.
\item[\rm (d)] $\mathcal F$ is a linearly free system.
\end{enumerate}
\end{theorem}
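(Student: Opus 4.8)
The plan is to avoid the Baire-category scheme of Theorem \ref{Th: groups in residual subsets}: here the family $\mathcal F$ is not a group, products without repetition carry no algebraic structure, and, crucially, the given $f \in HC(C_\varphi)$ is \emph{arbitrary} and may have zeros. Thus for a fixed product $P = g_{i_1}\cdots g_{i_{r-1}}$ the multiplication map $g \mapsto P\cdot g$ need not have dense range, so $M_P^{-1}(HC(C_\varphi))$ need not be dense and a genericity argument is unavailable. Instead I would argue by an explicit Runge-type construction driven by the dynamics of $\varphi$. The key dynamical input is that, since $G$ is simply connected and $\varphi$ has no fixed point, every forward orbit escapes to $\partial G$ (a translation if $G=\C$, a parabolic or hyperbolic automorphism otherwise, by the Denjoy--Wolff behaviour after a Riemann map): for each compact $K\subset G$ and each compact $L\subset G$ one has $\varphi^n(K)\cap L=\varnothing$ for all large $n$. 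Hence, given any infinite set of admissible times and any finite list of previously used compact ``blocks'', one can always pick a time $n$ for which $\varphi^n(K)$ is a fresh block disjoint from the earlier ones; taking the blocks to be closed topological disks keeps every finite union with connected complement in $\C_\infty$, so Runge's theorem on $G$ applies.

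I would build $\mathcal F = \{f_0 = f, f_1, f_2, \dots\}$, each $f_i$ ($i\ge 1$) obtained as a uniform-on-compacta limit of Runge approximations whose constraints live on pairwise disjoint runaway blocks. Run a single diagonal enumeration of all \emph{tasks}, a task being a quadruple: a finite subfamily $\mathcal F_0$, a target $h$ from a fixed countable dense subset of $\mathcal H(G)$, a compact $K_i$ from an exhaustion, and a tolerance $1/s$. To process a task I dedicate one fresh block $B=\varphi^n(K_i)$ and arrange on $B$ that \emph{exactly one} factor approximates the required value while every other factor is $\approx 1$. For a product \emph{not} containing $f$, I choose $n$ freely, set $g_{i_1}\approx h\circ\varphi^{-n}$ on $B$ and $g_{i_j}\approx 1$ on $B$ for $j\ge 2$, whence $\big(\prod_{g\in\mathcal F_0} g\big)\circ\varphi^n \approx h$ on $K_i$. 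Because distinct tasks occupy disjoint blocks and the approximation on each block is two-sided (nothing ever forces a factor to be a power of another), the no-repetition hypothesis is precisely what permits each factor to be prescribed independently; this is the structural reason squares must be excluded.

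The main obstacle is the uncontrollable factor $f_0=f$, whose values I may not set. I would handle a product containing $f$ by converting its own hypercyclicity into a choice of \emph{time}: since $f\in HC(C_\varphi)$, the return set $\{\,n:\ \norm{f\circ\varphi^n-h}_{K_i} < \tfrac{1}{2s}\,\}$ is infinite, so I select from it an $n$ so large that $B=\varphi^n(K_i)$ is a fresh block, and then force only the \emph{constructed} factors to satisfy $g_{i_j}\approx 1$ on $B$; this yields $\big(f\prod_j g_{i_j}\big)\circ\varphi^n\approx h$ on $K_i$. Each $f_i$ participates in only countably many tasks, all of whose blocks are pairwise disjoint and escape to $\partial G$, so the totality of constraints defining $f_i$ is a locally finite Runge interpolation problem and converges in $\mathcal H(G)$; moreover $f_i$'s constraints on a shared block are mutually consistent, as $f_i$ is there either the unique approximator, an $\approx 1$ factor, or absent.

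Finally I would read off the four conclusions. The singleton tasks ($|\mathcal F_0|=1$) make every $f_i$ hypercyclic, giving $\mathcal F\subset HC(C_\varphi)$, hence (a). Tasks that instead demand $f_i\approx h$ on central compacta $K_i$ deliver density, hence (b). Property (c) is exactly the content of the task scheme. For the linearly free system (d), at the step introducing $f_i$ I add the requirement that $f_i$ avoid the finite-dimensional, hence closed and nowhere dense, subspace $\mathrm{span}\{f_0,\dots,f_{i-1}\}$; this is compatible with everything else because the constraints so far occupy only far-out blocks (and one central compact), leaving infinitely many degrees of freedom near the centre. The delicate points left to verify are the simultaneous disjointness-and-connected-complement bookkeeping and the convergence of each $f_i$, both routine once the escaping-block lemma is established.
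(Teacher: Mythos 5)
Your route is genuinely different from the paper's, and it is worth recording what the paper actually does, because it is much lighter. The paper never touches Runge's theorem: it uses the fact that $C_\varphi$ is \emph{mixing} (since $\varphi$ has no fixed points), so that for \emph{every} increasing sequence $(n_k)$ the set of functions with dense $(C_\varphi^{n_k})$-orbit is residual in $H(G)$. It then builds $\mathcal F=\{f_0=f,f_1,f_2,\dots\}$ together with \emph{nested} time sequences $(p(n,k))_k$ such that $f_n\in G_n$ is universal along $(p(n,k))_k$ while $f_j\circ\varphi^{p(n+1,k)}\to 1$ for all $j\le n$; a finite product is then handled in one line by letting the largest-index factor chase an arbitrary target along a further subsequence of its own time sequence, while every other factor tends to $1$ along that same (nested) sequence. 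Your construction replaces this soft argument by explicit interpolation on runaway blocks $\varphi^n(K)$. The dynamical input you invoke (the run-away property for fixed-point-free automorphisms of simply connected domains) is correct -- it is precisely why $C_\varphi$ is mixing -- and your treatment of the uncontrollable factor $f_0=f$ through its own return times is the right move and mirrors the paper's first step. I believe your task scheme can be pushed through to give (a), (b) and (c), at the cost of the disjointness/connected-complement and summable-correction bookkeeping that you defer; this is substantially heavier than, but not in conflict with, the paper's argument.

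There is, however, a genuine gap in your proof of (d). At the step where $f_i$ is introduced, the functions $f_1,\dots,f_{i-1}$ are not yet final -- they will be corrected at all later stages by tasks in which they participate -- so ``$f_i$ avoids $\mathrm{span}\{f_0,\dots,f_{i-1}\}$'' is a condition on a moving target. Avoiding a fixed closed nowhere dense set is an open condition, but here both $f_i$ and the spanning vectors keep changing, and knowing that $f_i^{(m)}\notin\mathrm{span}\{f_0^{(m)},\dots,f_{i-1}^{(m)}\}$ at every finite stage does not imply linear independence of the limits: a relation $\sum_j c_jf_j=0$ with very large coefficients is compatible with every stage being separated from the current span by a fixed amount, unless you maintain a quantitative, uniform lower bound on the distance to the moving span (equivalently, uniform linear independence of the approximants), which you do not set up. The paper gets (d) for free from the nesting of its time sequences: if $c_0f_0+\dots+c_Nf_N=0$ with $c_N\ne 0$, composing with $\varphi^{p(N,k)}$ sends every $f_j$ with $j<N$ to $1$, forcing $f_N\circ\varphi^{p(N,k)}$ to converge to a constant and contradicting its universality along $(p(N,k))_k$. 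In your disjoint-block design there is no common sequence of times along which $f_0,\dots,f_{N-1}$ all tend to $1$ while $f_N$ is universal, so this argument is not available to you as is; to repair (d) you must either prove the stability-of-span estimate or add a dedicated family of constraints (for instance a triangular system of point evaluations, or a nested family of times) that essentially re-imports the structure the paper exploits.
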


\begin{proof}
Since $\varphi$ has no fixed points, the operator $C_\varphi$ is mixing (see \cite[Theorem 4.37]{grosseperis2011}).
In the context of F-spaces (as $H(G)$ is), this is equivalent to the fact that, for every (strictly increasing) subsequence $(n_k) \subset \N$,
the set \,$\mathcal U ((C_\varphi^{n_k}))$ \,of functions having dense $(C_\varphi^{n_k})$-orbit is dense (even residual) in $H(G)$. Of course, mixing property implies hypercyclicity.
Observe that $(C_\varphi)^m = C_{\varphi^m}$ for $m \in \N$, where $\varphi^1 := \varphi$ and $\varphi^{m+1} := \varphi \circ \varphi^m$.

\vskip 5pt

Fix $f \in HC(C_\varphi )$ as well as a countable topological basis $(G_n)$ for the topology of $H(G)$
(recall that $H(G)$ is metrizable and separable, hence second countable). We set $f_0 := f$ and $G_0 := H(G)$.
Then there is a sequence $(p(1,k)) \subset \N$
such that $f_0 \circ \varphi^{p(1,k)} \to 1$ $(k \to \infty )$ uniformly on each compact subset of $G$.
Since the set $\mathcal U ((C_\varphi^{p(1,k)}))$ is dense in $H(G)$, there exists $f_1 \in G_1 \cap \mathcal U ((C_\varphi^{p(1,k)}))$.
In particular, for the constant function $z \mapsto 1$, there is a subsequence $(p(2,k)) \subset (p(1,k))$ satisfying
$f_1 \circ \varphi^{p(2,k)} \to 1$ $(k \to \infty )$ in $H(G)$. Of course, $f_0 \circ \varphi^{p(2,k)} \to 1$ in $H(G)$.

\vskip 5pt

By following this procedure, we can obtain recursively a countable family \,$\mathcal F := \{f_n\}_{n \ge 0} \subset H(G)$
\,as a well as countable family \,$\{(p(n,k))_{k \ge 1}\}_{n \ge 0}$
\,of strictly increasing sequences of natural numbers, satisfying, for each $n \ge 0$, the following:
\begin{itemize}
\item $f_n \in G_n \cap \mathcal U ((C_\varphi^{p(n,k)}))$,
\item $(p(n+1,k)) \subset (p(n,k))$, and
\item $f_j \circ \varphi^{p(n+1,k)} \to 1$ $(k \to \infty )$ in $H(G)$ for all $j = 0, \dots ,n$.
\end{itemize}
Since $(G_n)$ is a topological basis and $f_n \in G_n$, the set $\mathcal F$ is dense in $H(G)$. By construction, $f \in \mathcal F$ and,
since $HC(C_\varphi ) \supset \mathcal U ((C_\varphi^{p(n,k)}))$ for every $n$, we get $\mathcal F \subset HC(C_\varphi )$.
Now, if $\varnothing \ne \mathcal F_0 \subset \mathcal F$ with \,$\mathcal F_0$ \,finite, we can write \,$\mathcal F_0 = \{f_{m(1)}, \dots , f_{m(N)}\}$,
with $0 \le m(1) < m(2) < \cdots < m(N)$. Let
$$
h := \prod_{g \in \mathcal F_0} g = \prod_{j=1}^N f_{m(j)}.
$$
Tri\-via\-l\-ly, $h \in HC(C_\varphi )$
if $N=1$. If $N > 1$ then $h = h_0 \cdot f_{m(N)}$, where $h_0 := \prod_{j=1}^{N-1} f_{m(j)}$.
Fix \,$F \in H(G)$.
As \,$f_{m(N)} \in \mathcal U ((C_\varphi^{p(m(N),k)}))$, there is a subsequence $(\nu_k ) \subset (p(m(N),k))$ \,such that \,$f_{m(N)} \circ \varphi^{\nu_k} \to F$ $(k \to \infty )$
uniformly on compacta in $G$. But \,$f_{m(j)} \circ \varphi^{\nu_k} \to 1$ $(k \to \infty )$ \,uniformly on compacta in \,$G$ \,for each
\,$j \in \{1,\dots ,N-1\}$.
This entails \,$h_0 \circ \varphi^{\nu_k} \to 1$ \,in \,$H(G)$. Thus
$$
h \circ \varphi^{\nu_k} = (h_0 \circ \varphi^{\nu_k}) \cdot (f_{m(N)} \circ \varphi^{\nu_k}) \longrightarrow 1 \cdot F = F \hbox{ \ in \ } H(G).
$$
Consequently, $h \in HC(C_\varphi )$. So far, (a), (b) and (c) have been proved.

\vskip 5pt

In order to prove (d), assume, by way of contradiction, that there is some $N$-tuple $(c_0,c_1, \dots ,c_N) \in \C^{N+1} \setminus \{(0, \dots , 0)\}$
such that $c_0f_0 + \dots + c_Nf_N = 0$, where, without loss of generality, we can suppose $c_N \ne 0$. Then
$c_0 (f_0 \circ \varphi^{p(N,k)}) + \dots + c_N (f_N \circ \varphi^{p(N,k)}) = 0$ for all $k \ge 1$. Letting $k \to \infty$ we get
$c_0 \cdot 1 + \cdots + c_{N-1} \cdot 1 + c_N (f_N \circ \varphi^{p(N,k)})  \to 0$, so
$$
f_N \circ \varphi^{p(N,k)} \longrightarrow - \sum_{j=0}^{N-1} c_j c_N^{-1} \hbox{ \ in \ } \mathcal H (G) \hbox{ \ as \ } k \to \infty,
$$
which is absurd because $\{f_N \circ \varphi^{p(N,k)}\}_{k \ge 1}$ is dense in \,$\mathcal H (G)$. The theorem is now totally proved.
\end{proof}

\noindent{\bf Acknowledgments.} 

The authors are indebted to the referee for helpful comments and suggestions, specially for nice simplifications of the proofs of some results.

The first author has been supported by the Plan
Andaluz de Investigaci\'on de la Junta de Andaluc\'{\i}a FQM-127 Grant P08-FQM-03543 and by MEC Grant MTM2015-65242-C2-1-P.
The second author has been supported by MEC Grant MTM2016-75963-P.
The second and third authors were also supported by Generalitat Valenciana, Project GV/2010/091, and by Universitat Polit\`ecnica de Val\`encia, Project PAID-06-09-2932. The fourth author has been supported by Grant MTM2015-65825-P.


\begin{bibdiv}
\begin{biblist}

\bib{ahlfors1979}{book}{
  author={Ahlfors, L.V.}, 
  title={Complex Analysis},
  edition={3},
  publisher={McGraw-Hill},
  place={London},
  date={1979},
}

\bib{aronbernalpellegrinoseoane}{book}{
  author={Aron, R.M.},
  author={Bernal-Gonz\'alez, L.},
  author={Pellegrino, D.},
  author={Seoane-Sep\'ulveda, J.B.},
  title={Lineability: The search for linearity in Mathematics},
  series={Monographs and Research Notes in Mathematics},
  publisher={Chapman \& Hall/CRC},
  place={Boca Raton, FL},
  date={2016},
}
	
\bib{aronconejeroperisseoane2007}{article}{
	author={Aron, R.M.},
	author={Conejero, J.A.},
	author={Peris, A.},
	author={Seoane-Sep{\'u}lveda, J.B.},
	title={Powers of hypercyclic functions for some classical hypercyclic operators},
	journal={Integr. Equ. Oper. Theory},
	volume={58},
	date={2007},
	number={4},
	pages={591--596},
}	

\bib{aronconejeroperisseoane2007sums}{article}{
  author={Aron, R.M.},
  author={Conejero, J.A},
  author={Peris, A.}
  author={Seoane-Sepulveda, J.B.},
  title={Sums and products of bad functions},
  journal={Contemporary Mathematics},
  volume={435},
  pages={47--52},
  year={2007},
  publisher={Providence, RI: American Mathematical Society},
}

\bib{arongurariyseoane2005}{article}{
	author={Aron, R.M.},
	author={Gurariy, V.I.},
	author={Seoane-Sep\'{u}lveda, J.B.},
	title={Lineability and spaceability of sets of functions on \(\Bbb R\)},
	journal={Proc. Amer. Math. Soc.},
	volume={133},
	date={2005},
	number={3},
	pages={795--803},
}

\bib{bayartmatheron2009}{book}{
	author={Bayart, F.},
	author={Matheron, E.},
	title={Dynamics of Linear Operators},
	publisher={Cambridge Tracts in Ma\-the\-ma\-tics, Cambridge University Press},
	date={2009},
}

\bib{berensteingay1995}{book}{
    author={Berenstein, C.A.},
    author={Gay, R.},
    title={Complex analysis and special topics in harmonic analysis},
    publisher={Springer},
    place={New York},
    date={1995},
}

\bib{bernal1997}{article}{
    author={Bernal-Gonz\'alez, L.},
	title={On universal functions with zero-free derivatives},
	journal={Arch. Math.},
	volume={68},
	date={1997},
	pages={145--150},
}

\bib{bernal1999}{article}{
    author={Bernal-Gonz\'alez, L.},
    title={Hypercyclic sequences of differential and antidifferential operators},
    journal={J. Approx. Theory},
    volume={96},
    date={1999},
    pages={323--337},
}

\bib{bernalbonillacostakis2012}{article}{
author={Bernal-Gonz\'alez, L.},
author={Bonilla, A.},
author={Costakis, G.},
title={On the growth of zero-free MacLane-universal entire functions},
journal={Indag. Math.},
volume={23},
date={2012},
pages={311--317},
}

\bib{bernalpellegrinoseoane2014linear}{article}{
    author={Bernal-Gonz\'alez, L.},
    author={Pellegrino, D.},
    author={Seoane-Sep\'ulveda, J.B.},
    title={Linear subsets of nonlinear sets in topological vector spaces},
    journal= {Bull. Amer. Math. Soc. (N.S.)},
    volume={51},
    year={2014},
    number={1},
    pages={71--130},
}

\bib{besconejeropapathaniasiou2016}{article}{
author={B\`es, J.},
author={Conejero, J.A.},
author={Papathanasiou, D.},
title={Convolution operators supporting hypercyclic algebras},
journal={J.~Math.~Anal.~Appl.},
volume={445},
date={2017},
number={2},
pages={1232--1238}, 
}

\bib{besconejeropapathaniasiou2018}{article}{
	author={B\`es, J.},
	author={Conejero, J.A.},
	author={Papathanasiou, D.},
	title={Hypercyclic algebras for convolution and composition operators},
	journal={Preprint},
	volume={},
	date={2017},
	number={},
	pages={},
}

\bib{boas1954}{book}{
  author={Boas Jr., R.P.},
  title={Entire functions},
  publisher={Academic Press},
  place={New York},
  date={1954},
}

\bib{Seo2}{article}{
	author={Cariello, Daniel},
	author={Seoane-Sep\'ulveda, Juan B.},
	title={Basic sequences and spaceability in $\ell_p$ spaces},
	journal={J. Funct. Anal.},
	volume={266},
	date={2014},
	number={6},
	pages={3797--3814},
	doi={10.1016/j.jfa.2013.12.011},
}

\bib{christensen}{book}{
  author={Christensen., J.P.R.},
  title={Topology and Borel structure},
  publisher={North Holland},
  place={Amsterdam},
  date={1974},
}

\bib{TAMS-Enflo}{article}{
	author={Enflo, P.H.},
	author={Gurariy, V.I.},
	author={Seoane-Sep\'ulveda, J.B.},
	title={Some results and open questions on spaceability in function
		spaces},
	journal={Trans. Amer. Math. Soc.},
	volume={366},
	date={2014},
	number={2},
	pages={611--625},
}


\bib{godefroyshapiro1991}{article}{
  author={Godefroy, G.},
  author={Shapiro, J.H.},
  title={Operators with dense, invariant, cyclic vectors manifolds},
  journal={J. Funct. Anal.},
  volume={98},
  date={1991},
  number={2},
  pages={229--269},
}

\bib{grosse1990universal}{article}{
  author={Grosse-Erdmann, Karl-Goswin},
  title={On the universal functions of G.R.~MacLane},
  journal={Complex Variables and Elliptic Equations},
  volume={15},
  number={3},
  pages={193--196},
  year={1990},
}

\bib{grosseperis2011}{book}{
  author={Grosse-Erdmann, K.-G.},
  author={Peris, A.},
  title={Linear Chaos},
  publisher={Springer},
  place={London},
  date={2011},  
}

\bib{herzog1994}{article}{
author={Herzog, G.},
	title={On zero-free universal entire functions},
	journal={Arch. Math.},
	volume={63},
	date={1994},
	pages={329--332},
}

\bib{horvath1966}{book}{
  author={Horv\'ath, J.},
  title={Topological vector spaces and distributions, {\rm Vol.~I}},
  publisher={Addison-Wesley},
  place={Reading, Massachusetts},
  date={1966},
}

\bib{markley2010}{book}{
  author={Markley, N.G.},
  title={Topological Groups: An Introduction},
  publisher={Wiley},
  place={NJ},
  date={2010},
}

\bib{menet2014}{article}{
  author={Menet, Q.},
  title={Hypercyclic subspaces and weighted shifts},
  journal={Adv. Math.},
  volume={255},
  date={2014},
  pages={305--337},
}

\bib{montes1996}{article}{
  author={Montes-Rodr\'{\i }guez, A.},
  title={Banach spaces of hypercyclic vectors},
  journal={Michigan Math.~J.},
  volume={43},
  date={1996},
  number={3},
  pages={419--436},
}

\bib{munkres2000}{book}{
  author={Munkres, J.R.},
  title={Topology: A First Course},
  publisher={2nd ed., Prentice-Hall Inc.},
  place={Upper Saddle River, NJ},
  date={2000},
}

\bib{oxtoby1980}{book}{
  author={Oxtoby, J.C.},
  title={Measure and Category}, 
  publisher={2nd edition, Springer-Verlag, New York},
  date={1980},
}

\bib{petersson2006}{article}{
  author={Petersson, H.},
  title={Hypercyclic subspaces for Fr{\'e}chet spaces operators},
  journal={J. Math. Anal. Appl.},
  volume={319},
  date={2006},
  number={2},
  pages={764--782},
}

\bib{Seo1}{book}{
	author={Seoane-Sep\'ulveda, Juan B.},
	title={Chaos and lineability of pathological phenomena in analysis},
	note={Thesis (Ph.D.)--Kent State University},
	publisher={ProQuest LLC, Ann Arbor, MI},
	date={2006},
	pages={139},
	isbn={978-0542-78798-0},
}

\bib{Seo3}{article}{
	author={Seoane-Sep\'ulveda, Juan B.},
	title={Explicit constructions of dense common hypercyclic subspaces},
	journal={Publ. Res. Inst. Math. Sci.},
	volume={43},
	date={2007},
	number={2},
	pages={373--384},
}

\bib{shkarin1993}{article}{
    author={Shkarin, S.A.},
    title={On the growth of D-universal functions},
    journal={On the growth of D-universal functions},
    volume={48},
    year={1993},
    number={6},
    pages={49--51},
}

\bib{shkarin2010}{article}{
	author={Shkarin, S.},
	title={On the set of hypercyclic vectors for the differentiation operator},
	journal={Israel J. Math.},
	volume={180},
	date={2010},
	number={1},
	pages={271--283},
}

\bib{wengenroth2003}{article}{
  author={Wengenroth, J.},
  title={Hypercyclic operators on nonlocally convex spaces},
  journal={Proc. Amer. Math. Soc.},
  volume={131},
  date={2003},
  number={6},
  pages={1759--1761},
}

\end{biblist}
\end{bibdiv}

\end{document}